\DeclareMathAlphabet{\pazocal}{OMS}{zplm}{m}{n}
\DeclareMathOperator{\modul}{mod}
\DeclareMathOperator{\tw}{tw}			
\theoremstyle{plain}
\newtheorem{theorem}{Theorem}
\newtheorem{corollary}[theorem]{Corollary}
\newtheorem{definition}[theorem]{Definition}
\newtheorem{proposition}[theorem]{Proposition}
\newtheorem{remark}[theorem]{Remark}
\theoremstyle{remark}
\newcommand{\superscript}[1]{\ensuremath{^{\textrm{#1}}}}
\def\wu{\superscript{*}}
\def\wg{\superscript{$\star$}}
\begin{document}

\title[Modularity of minor-free graphs]{Modularity of minor-free graphs} 

\author[M.~Laso{\'n}]{Micha{\l} Laso{\'n}\wu\footnote{\wu michalason@gmail.com; Institute of Mathematics of the Polish Academy of Sciences, ul.\'{S}niadeckich 8, 00-656 Warszawa, Poland}} 

\author[M.~Sulkowska]{Ma{\l}gorzata Sulkowska\wg\footnote{\wg malgorzata.sulkowska@pwr.edu.pl; Universit{\'e} C{\^o}te d'Azur, CNRS, Inria, I3S, France; Wroc{\l}aw University of Science and Technology, Faculty of Fundamental Problems of Technology, Department of Fundamentals of Computer Science}}

\thanks{Micha{\l} Laso\'{n} was supported by Polish National Science Centre grant no. 2019/34/E/ST1/00087.}

\keywords{Modularity, minor-free graph, sparse graph, edge separator, Cheeger's inequality}

\begin{abstract} 
We prove that a class of graphs with an excluded minor and with the maximum degree sublinear in the number of edges is maximally modular, that is, modularity tends to $1$ as the number of edges tends to infinity. 
\end{abstract} 

\maketitle 

\section{Introduction}

\subsection{Modularity}

Modularity is a well-established parameter measuring the presence of community structure in the graph. It was introduced by Newman and Girvan in $2004$ (\cite{NeGi04}). Nowadays it is widely used as a quality function for community detection algorithms. The most popular heuristic clustering algorithms (check Louvain \cite{BlGuLaLe08} or Leiden \cite{TrWaEc19}) find the proper partition trying to maximize exactly this parameter. The definition of modularity is based on the comparison between the density of edges inside communities one observes in the graph and the density of edges one would expect if the edges of the graph were wired randomly preserving degree sequence. We make it precise just below.

Consider a simple undirected graph $G$ with $|V(G)|=n$ and $|E(G)|=m$. Whenever the context is clear we write  $V$ and $E$ for $V(G)$ and $E(G)$, respectively. For $v \in V$ by $\deg(v)$ denote the degree of a vertex $v$ in $G$. 
For a subset of vertices $S \subseteq V$ define $E(S)$ to be the set of edges in $G$ with both end-vertices within $S$ and let $\deg(S)=\sum_{v \in S}\deg(v)$. The \emph{modularity} of $G$ is defined as follows.

\begin{definition}[Modularity, \cite{NeGi04}] \label{def:modularity}
	Let $G$ be a graph with at least one edge. For a partition $\mathcal{A}$ of $G$ into induced subgraphs define its modularity score on $G$ as
	\[
	\modul_{\mathcal{A}}(G) = \sum_{A\in\mathcal{A}}\left(\frac{\vert E(A)\vert}{\vert E(G)\vert}-\left(\frac{\deg(V(A))}{\deg(V(G))}\right)^2\right).
	\]
	Modularity of $G$ is given by
	\[
	\modul(G) = \max_{\mathcal{A}}\modul_{\mathcal{A}}(G).
	\]
\end{definition}

Conventionally, a graph with no edges has modularity equal to $0$. For a given partition $\mathcal{A}$ the value $\sum_{A \in \mathcal{A}} \frac{|E(A)|}{|E(G)|}$ is called an \emph{edge contribution} while $\sum_{A \in \mathcal{A}} \left(\frac{\deg(V(A))}{\deg(V(G))}\right)^2$ is a \emph{degree tax}. A single summand of the modularity score is the difference between the fraction of edges within $A$ and the expected fraction of edges within $A$ if we considered a random multigraph on $V$ with the degree sequence given by $G$. 

It is easy to check that $0\leq \modul(G)<1$, and also that adding or deleting isolated vertices from the graph does not impact its modularity.

In practice, the problem of community detection very often concerns complex networks, i.e., graphs modeling real-life systems. Since most complex networks are sparse, it is natural to investigate which classes of sparse graphs exhibit high modularity. Our paper addresses exactly this question - modularity of commonly considered subclasses of nowhere dense graphs, that is a class of graphs introduced by Ne{\v{s}}et{\v {r}}il and Ossona de Mendez in \cite{NeOs11} as capturing the notion of sparsity.

\subsection{Related work} For a concise, up to date summary of modularity of various classes of graphs check the appendix of \cite{DiSk20} by McDiarmid and Skerman from $2020$. 

Here we focus on modularity of commonly considered subclasses of nowhere dense graphs. First, recall the definition of \emph{maximally modular} class of graphs.

\begin{definition} [Maximally modular class of graphs]
	A class of graphs $\mathcal{C}$ is maximally modular if for every $\varepsilon>0$ there exists $M_{\varepsilon}$ such that whenever $G$ is a graph from $\mathcal{C}$ with $m \geq M_{\varepsilon}$ edges, then $\modul(G)>1-\varepsilon$.
\end{definition}

It is not hard to show (consult \cite{MoSoVi11}) that a maximally modular class of graphs has the maximum degree sublinear in the number of edges. Hence, sublinear maximum degree is a necessary condition for a class of graphs to be maximally modular.

In $2018$ McDiarmid and Skerman formulated the following sufficient condition for a class of graphs to be maximally modular. By $\Delta(G)$ and $\tw(G)$ we denote respectively the maximum degree in $G$ and the treewidth of $G$.

\begin{corollary} [\cite{DiSk18}, Corollary $12$] \label{cor:McD_Sker}
	For $m = 1, 2, \ldots$ let $G_m$ be a graph with $m$ edges. If $tw(G_m) \cdot \Delta(G_m) = o(m)$ then  $\modul(G_m) \rightarrow 1$ as $m \rightarrow \infty$.
\end{corollary}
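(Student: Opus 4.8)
The plan is to reduce the statement to a single partitioning lemma and then optimize a free parameter. Fix $m$, write $G = G_m$, $t = \tw(G)$, $\Delta = \Delta(G)$, and recall that $\deg(V(G)) = 2m$. For any partition $\mathcal{A}$ of $G$ the modularity score equals the edge contribution minus the degree tax, and since $\modul(G) < 1$ always holds, it suffices to produce, for each large $m$, a single partition whose edge contribution tends to $1$ and whose degree tax tends to $0$. The degree tax is easy to control once every part $A$ has small weight: if $\deg(V(A)) \le B$ for all $A$, then
\[
\sum_{A \in \mathcal{A}} \left( \frac{\deg(V(A))}{2m} \right)^2 \le \max_{A} \frac{\deg(V(A))}{2m} \cdot \sum_{A} \frac{\deg(V(A))}{2m} \le \frac{B}{2m}.
\]
For the edge contribution it suffices that the number of edges joining distinct parts be $o(m)$. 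So everything reduces to partitioning $V(G)$ into parts of total degree at most $B$ while cutting few edges.

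First I would establish the partitioning lemma: \emph{a graph of treewidth $t$ and maximum degree $\Delta$ admits, for every $B \ge (t+1)\Delta$, a partition of its vertex set into parts each of total degree $O(B)$ such that at most $O(mt\Delta/B)$ edges join distinct parts}. The tool is the separator property of bounded treewidth. Fix a tree decomposition $(T, \{B_x\})$ of width $t$, assign each vertex of $G$ to a single bag containing it so that the tree nodes receive weights (the degrees of their assigned vertices) summing to $2m$, and note that each node then has weight at most $(t+1)\Delta \le B$. Greedily cut $T$ into connected subtrees each of weight $\Theta(B)$; since node weights are at most $B$ this produces $O(m/B)$ subtrees, hence $O(m/B)$ cut tree-edges. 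Cutting a single tree-edge $(p,c)$ corresponds to the vertex separator $B_p \cap B_c$ of size at most $t+1$, and by the running-intersection property every $G$-edge joining two different parts is incident to the separator of some cut tree-edge; as each separator vertex has degree at most $\Delta$, the number of cut edges is at most $O(m/B) \cdot (t+1)\Delta = O(mt\Delta/B)$. Taking the vertices assigned to each subtree as one part gives parts of total degree $O(B)$.

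With the lemma in hand I would choose the parameter to balance the two error terms. Setting $B = \sqrt{t\Delta m}$ (which satisfies $B \ge (t+1)\Delta$ for all large $m$, since $t\Delta = o(m)$ forces $m \gg t\Delta$), the degree tax is at most $O(B/m) = O(\sqrt{t\Delta/m}) = o(1)$, while the fraction of cut edges is $O(t\Delta/B) = O(\sqrt{t\Delta/m}) = o(1)$, so the edge contribution tends to $1$. Hence this single partition witnesses $\modul(G_m) \ge 1 - o(1)$, and together with $\modul(G_m) < 1$ this gives $\modul(G_m) \to 1$.

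The main obstacle is the partitioning lemma, and in particular obtaining the cut-edge bound linear in $m/B$ rather than $m\log m/B$. A naive recursion using balanced separators loses a logarithmic factor from the recursion depth, which would only prove the result under the stronger hypothesis $t\Delta \log m = o(m)$; routing the argument through one fixed tree decomposition and a single greedy sweep of its tree removes the logarithm. The remaining care is bookkeeping: because each vertex is assigned to exactly one tree node, every separator vertex lands in exactly one part, so no part's degree is inflated beyond $O(B)$, and one must verify via the running-intersection property that every crossing edge is charged to some cut tree-edge.
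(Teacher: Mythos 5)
First, a point of reference: the paper never proves this corollary --- it is imported from McDiarmid and Skerman \cite{DiSk18} as a known result --- so the only internal analogue is the proof of the paper's own main theorem. There your reduction coincides exactly with the paper's strategy: exhibit a single partition whose parts all have small total degree (this kills the degree tax, via the same estimate $\sum_A w(A)^2 \le \max_A w(A)\cdot\sum_A w(A)$ that appears in the paper's proof of Theorem \ref{thm:main}) and which cuts $o(m)$ edges (this keeps the edge contribution near $1$). That reduction, your charging of crossing graph edges to separators of cut tree-edges via the running-intersection property, and the final balancing $B=\sqrt{t\Delta m}$ are all correct; where the paper manufactures the cuts spectrally (Cheeger plus the bound of Biswal et al.) for minor-free graphs, you use the tree decomposition directly, which is the natural tool under a treewidth hypothesis.

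The gap is in your proof of the partitioning lemma, at the sentence ``greedily cut $T$ into connected subtrees each of weight $\Theta(B)$; since node weights are at most $B$ this produces $O(m/B)$ subtrees.'' The count of $O(m/B)$ subtrees secretly requires every part to have weight $\Omega(B)$, and for an arbitrary tree decomposition this cannot be enforced, because $T$ may have nodes of enormous degree. Concretely, let $G$ be the disjoint union of $K_{t+1}$ and $k$ independent edges, and take the legitimate width-$t$ decomposition whose tree is a star: the center bag holds the clique and each of the $k$ leaf bags holds one edge (assigned weight $2$). Any connected subtree of $T$ avoiding the center is a single leaf, so every partition of $T$ into connected subtrees of weight $O(B)$ must cut at least $k-O(B)$ tree edges; with $B=\sqrt{t\Delta m}\approx t\sqrt{k}$ this is of order $k$, larger than $2m/B\approx \sqrt{k}/t$ by an unbounded factor, and your bound (crossing graph edges) $\le$ (cut tree-edges)$\cdot(t+1)\Delta$ then yields nothing. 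The repair is standard but must be stated: first pass to a tree decomposition of the same width whose tree has maximum degree $3$ (every graph admits one); then the rooted bottom-up greedy genuinely produces cut-off parts of weight in $[B,3B)$ plus one leftover part, hence at most $2m/B$ cut tree-edges, and the rest of your argument goes through verbatim. Incidentally, your stated reason for avoiding the balanced-separator recursion is mistaken: that recursion loses no logarithm, because once $B\ge 8(t+1)\Delta$ each terminal piece is one side of a split of a piece of weight $>B$ and therefore has weight $\Omega(B)$, so the recursion tree has only $O(m/B)$ nodes; this gives a second correct proof of your lemma, closer in spirit to the iterated-cutting process the paper itself runs in Proposition \ref{Proposition1}.
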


The above result is tight in a sense that $o(m)$ can not be replaced by $O(m)$. To justify it McDiarmid and Skerman present two examples. First, let $G$ be a star $K_{1,m}$ (with treewidth 1 and maximum degree $m$). Then, $\tw(G) \cdot \Delta(G)=m$ and, by \cite{MoSoVi11}, $\modul(G)=0$. Second, let $G$ be a random cubic graph on $n$ vertices (thus with $m=3n/2$ edges). Then, $\tw(G) \cdot \Delta(G)=O(m)$ and with high probability $\modul(G) \leq 0.79$ (see \cite{LiMi20,DiSk18}).

Thus, what follows from Corollary \ref{cor:McD_Sker} is that bounded treewidth in addition to the maximum degree sublinear in the number of edges already guarantees that the class of graphs is maximally modular. On the other hand the example of a random cubic graph gives that classes of bounded degree graphs are not maximally modular. These conclusions already lead to the classification presented in Figure \ref{fig:mod_sparse}.

\begin{figure}[!ht]
	\centering{
		\includegraphics[width = 0.7 \textwidth]{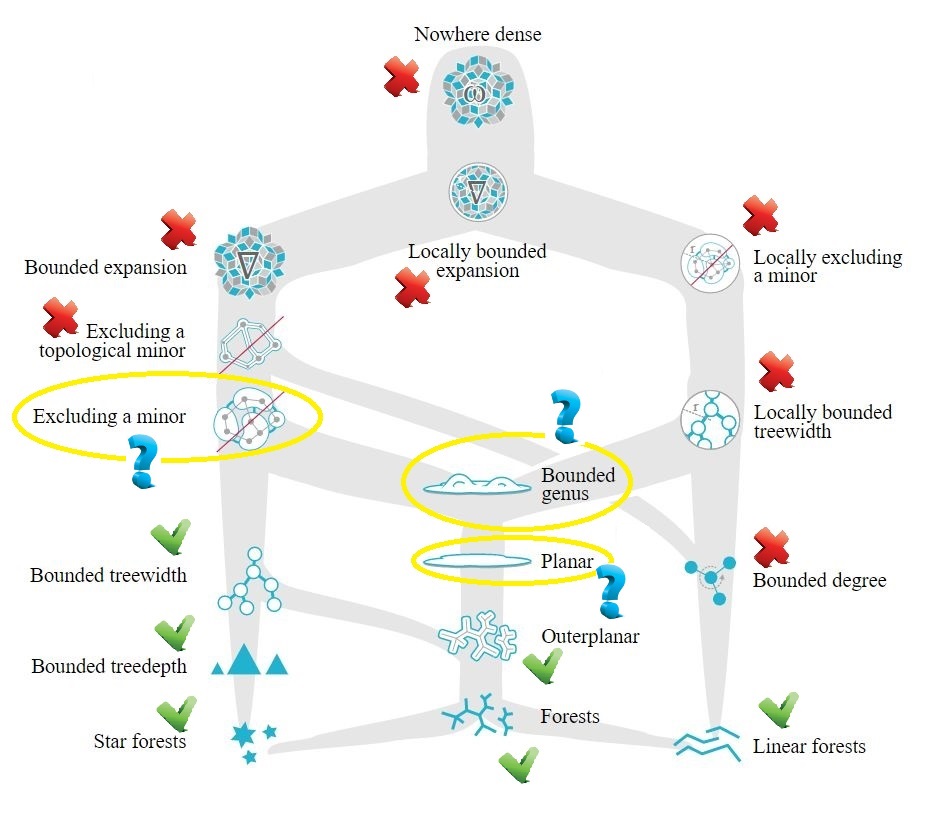}
		\caption{{\small Maximally and non-maximally modular subclasses of nowhere dense graphs. All classes of graphs are considered to have the maximum degree sublinear in the number of edges. (Background picture by Felix Reidl, source: https://tcs.rwth-aachen.de/$\sim$reidl/.)}} \label{fig:mod_sparse}
	}
\end{figure}

By Corollary \ref{cor:McD_Sker} one can obtain also some partial results for planar graphs and for bounded genus graphs.

Indeed, the random planar graph $G_n$ on $n$ vertices has $tw(G_n) = O(\sqrt{n})$ (which follows from the separator theorem for graphs of bounded genus \cite{GiHu84} and a recent result by Dvor{\'{a}}k and Norin \cite{DvNo19} establishing the linear dependence between the treewidth and the separation number for graphs of bounded genus). Next, with high probability $|E(G_n)| = \Theta(n)$ and, by \cite{DiRe08}, with high probability $\Delta(G_n) = O(\log{n})$. Thus, by Corollary \ref{cor:McD_Sker} random planar graphs are maximally modular.

Similarly, again by \cite{GiHu84,DvNo19}, for bounded genus graphs $\tw(G_m) = O(\sqrt{m})$. Thus, a class of bounded genus graphs with $\Delta(G_m) = o(\sqrt{m})$ is maximally modular.

\subsection{Our results} We prove the following.

\begin{theorem} \label{thm:main}
A class of graphs with an excluded minor and with the maximum degree sublinear in the number of edges is maximally modular.
\end{theorem}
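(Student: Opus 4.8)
The plan is to reduce Theorem~\ref{thm:main} to the sufficient condition provided by Corollary~\ref{cor:McD_Sker}. That corollary tells us that $\tw(G_m)\cdot\Delta(G_m)=o(m)$ forces $\modul(G_m)\to 1$. We are given that $\Delta(G_m)=o(m)$ (sublinear maximum degree), so it would be enough to control the treewidth. However, the naive hope that $\tw(G_m)=O(\sqrt{m})$ for every $K_t$-minor-free graph is false: for a $K_t$-minor-free graph on $n$ vertices one only has $\tw=O(n)$ in general (for instance, $K_t$-minor-free graphs may contain large grids only up to a bounded size, but the treewidth can still be linear in $n$ for dense-enough such graphs). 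So a direct application of Corollary~\ref{cor:McD_Sker} to the whole graph will not work, and the main obstacle is precisely that excluding a minor alone does not bound the treewidth sublinearly.

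The approach I would take is therefore \emph{not} to bound the treewidth of $G$ globally, but to exploit the well-known consequence of excluding a minor: sparsity plus good edge separators. First I would recall that a $K_t$-minor-free graph on $n$ vertices has $m=O(t\sqrt{\log t}\cdot n)$ edges (Kostochka--Thomason), so the class is sparse, and more importantly that such graphs admit sublinear \emph{edge} (or vertex) separators. Concretely, by the separator theorem for minor-free graphs (Alon--Seymour--Thomas), every $K_t$-minor-free graph on $n$ vertices has a balanced vertex separator of size $O(t^{3/2}\sqrt{n})$. The key idea is to use such separators recursively to partition $V(G)$ into many small pieces, each of bounded size, while cutting only a sublinear fraction of the edges. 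One then takes $\mathcal{A}$ to be this partition into small pieces and estimates $\modul_{\mathcal{A}}(G)$ directly from the definition.

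The heart of the argument is the direct estimate of the modularity score of the separator-based partition $\mathcal{A}$. The edge contribution $\sum_{A}|E(A)|/m$ equals $1$ minus the fraction of edges cut by the partition; recursive separation into pieces of size at most $s$ cuts at most $O(t^{3/2} m/\sqrt{s})$ edges (summing the separator sizes times the maximum degree across all levels of the recursion, or more cleanly bounding cut edges directly via the recursion), which is a vanishing fraction of $m$ as $s\to\infty$ provided $s$ grows. For the degree tax $\sum_{A}(\deg(V(A))/\deg(V(G)))^2$, each part $A$ has at most $s$ vertices and hence $\deg(V(A))\le s\,\Delta(G)$, while $\deg(V(G))=2m$; thus the tax is at most $\sum_A \deg(V(A))\cdot s\Delta(G)/(2m)^2 = s\Delta(G)/(2m)$, using $\sum_A\deg(V(A))=2m$. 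Since $\Delta(G)=o(m)$ we may choose the target piece-size $s=s(m)\to\infty$ slowly enough that simultaneously $s\Delta(G)/m\to 0$ (controlling the degree tax) while $t^{3/2}/\sqrt{s}\to 0$ (controlling the cut, i.e. the edge contribution approaches $1$). Balancing these two competing requirements—$s$ must be large for the cut to be negligible but small for the degree tax to be negligible—is the delicate point, but the gap left by the hypothesis $\Delta(G)=o(m)$ is exactly what makes a valid choice of $s$ possible.

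I expect the main obstacle to be making the recursive separation and the bound on the number of cut edges precise and uniform: one must ensure that the recursion produces pieces of size at most $s$ after finitely many balanced-separator steps, that the total number of vertices placed into separators over all levels is $O(t^{3/2} n/\sqrt{s})$, and—crucially—that the number of \emph{cut edges} (not merely separator vertices) is correspondingly small. A clean way to handle this is to separate on edges directly or to charge each cut edge to a separator vertex and use a weighted separator theorem so that the cut is controlled independently of $\Delta(G)$; then the degree tax and edge contribution bounds decouple cleanly and the choice of $s=s(m)$ completes the proof via Corollary~\ref{cor:McD_Sker}-style reasoning applied to this explicit partition rather than to the corollary as a black box.
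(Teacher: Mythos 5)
There is a genuine gap, and it sits exactly at the point you yourself flag as ``the main obstacle.'' Your claimed bound of $O(t^{3/2}m/\sqrt{s})$ cut edges does not follow from the Alon--Seymour--Thomas theorem: that theorem produces a set of separator \emph{vertices}, and turning a vertex separator into an edge cut costs the degrees of the separator vertices, i.e.\ a factor of up to $\Delta(G)$, which under the hypothesis of the theorem may be as large as $o(m)$ (e.g.\ $m/\log m$). Your proposed remedy---``separate on edges directly,'' via an edge separator theorem for minor-free graphs whose size is controlled independently of $\Delta(G)$---is not an available tool: the paper's own Remark points out that even the planar edge separator theorem gives only $O(\sqrt{\Delta(G)|V(G)|})$ (still $\Delta$-dependent), and that generalizing it to minor-free graphs is an \emph{open problem} whose solution would strengthen the paper's key proposition. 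So the quantitative heart of your plan is precisely the theorem that is missing.

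The arithmetic confirms that this is fatal rather than cosmetic. With honest accounting for the vertex-to-edge conversion, the cut fraction is $O(\Delta t^{3/2}/\sqrt{s})$ (using $|V|=\Theta(m)$ for minor-free graphs), forcing $s\gg\Delta^2$, while your degree-tax bound $s\Delta/(2m)$ forces $s\ll m/\Delta$; together these require $\Delta=o(m^{1/3})$. Even granting a hypothetical minor-free analogue of the planar $O(\sqrt{\Delta n})$ edge separator, the same two constraints give only $\Delta=o(\sqrt{m})$. Neither reaches the full range $\Delta=o(m)$, and a second culprit is that your pieces are balanced by vertex count, so the tax is bounded by the crude quantity $s\Delta/(2m)$. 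The paper circumvents both problems with spectral tools: the Biswal--Lee--Rao bound gives $\lambda_2(L)=O(\Delta h^6\log h/|V|)$, which combined with sparsity and $\Delta\le\delta m$ makes $\lambda_2(\mathcal{L})$ an arbitrarily small constant, and Cheeger's inequality then produces an edge cut of size at most $\sqrt{2\lambda_2(\mathcal{L})}\cdot\min\{\deg(S),\deg(V\setminus S)\}$---an edge cut normalized by the degree mass of the smaller side, with no $\Delta$ factor. Recursing until every component has degree mass (weight) below $\varepsilon/2$ bounds the degree tax by $\varepsilon/2$ directly, and a charging argument (each vertex lands on the smaller side at most $O(\log(1/\varepsilon))$ times) bounds the total number of deleted edges. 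Degree-mass balancing plus degree-normalized cuts is what lets the argument survive all the way up to $\Delta=o(m)$; size balancing plus vertex separators cannot.
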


Since classes of graphs with bounded genus and the class of planar graphs are subclasses of graphs with an excluded minor, the above theorem resolves, in positive, all three questions marked in Figure \ref{fig:mod_sparse} that remained unsolved. This way we achieve a complete classification of maximally modular classes among all commonly considered subclasses of nowhere dense graphs with sublinear maximum degree.
 
Our proof uses tools of spectral graph theory, in particular so-called Cheeger's Inequality and a recent important result by Biswal et al. (\cite{BiLeRa10}) for graphs with an excluded minor. 

\newpage
\section{Proof}

We begin by presenting tools of spectral graph theory and results for minor-free graphs that will be used in this section. 

Let $G=(V,E)$ be a simple undirected graph. Denote by $A$ its \emph{adjacency matrix} and by $D$ its \emph{diagonal degree matrix}. Recall that $L=D-A$ is the \emph{Laplacian matrix} of $G$. When $G$ has no isolated vertices, then $\mathcal{L}=D^{-\frac{1}{2}}LD^{-\frac{1}{2}}$ is the \emph{normalized Laplacian matrix} of $G$. Let $\lambda_i(L)$ and $\lambda_i(\mathcal{L})$ denote $i$\superscript{th} smallest eigenvalue of $L$ and $\mathcal{L}$, respectively. We concentrate on the second smallest eigenvalues, $\lambda_2(L)$ and $\lambda_2(\mathcal{L})$, as they carry information about the connectivity of $G$.


We will need to justify the existence of sufficiently small and reasonably balanced edge cuts in considered graphs. This will be done by Cheeger's inequality, which was first established for manifolds in $1970$ by Cheeger \cite{Ch70}, while the graph version is due to Alon and Milman \cite{Al86,AlMi85}.

\begin{theorem}[Cheeger's Inequality, \cite{AlMi85}] \label{TheoremCheeger} 
	When $G$ has no isolated vertices, then
	$$\frac{\lambda_2(\mathcal{L})}{2}\leq\min_{\emptyset\neq S\subsetneq V}\frac{|E(S,V\setminus S)|}{\min\{\deg(S),\deg(V\setminus S)\}}\leq \sqrt{2\lambda_2(\mathcal{L})},$$ where $E(S,V\setminus S)$ denotes the set of edges in $G$ between $S$ and $V\setminus S$.
\end{theorem}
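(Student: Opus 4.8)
The plan is to prove the two inequalities separately, both from the variational (Rayleigh-quotient) description of $\lambda_2(\mathcal{L})$. Writing $g$ for a test vector and substituting $g=D^{1/2}y$, one has $g^{T}\mathcal{L}g=\sum_{uv\in E}(y_u-y_v)^2$ and $g^{T}g=\sum_{v}\deg(v)y_v^2$; since $D^{1/2}\mathbf{1}$ spans the $0$-eigenspace of $\mathcal{L}$ (for connected $G$), Courant--Fischer gives
$$\lambda_2(\mathcal{L})=\min\left\{\frac{\sum_{uv\in E}(y_u-y_v)^2}{\sum_{v}\deg(v)y_v^2}\; :\; y\neq 0,\ \sum_{v}\deg(v)y_v=0\right\}.$$
Denote by $h$ the middle (conductance) quantity of the theorem and by $R(y)$ the ratio above. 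If $G$ is disconnected then both $\lambda_2(\mathcal{L})$ and $h$ vanish, so I may assume $G$ connected and $\lambda_2(\mathcal{L})>0$.

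For the left inequality $\lambda_2(\mathcal{L})/2\le h$, I would take a set $S$ attaining $h$, with $\deg(S)\le\deg(V\setminus S)$, and feed the Rayleigh quotient the test vector equal to $1/\deg(S)$ on $S$ and $-1/\deg(V\setminus S)$ off $S$. This vector is degree-orthogonal by construction, only cut edges contribute to the numerator, and a direct computation gives $R(y)=|E(S,V\setminus S)|\big(1/\deg(S)+1/\deg(V\setminus S)\big)$. Bounding $1/\deg(S)+1/\deg(V\setminus S)\le 2/\min\{\deg(S),\deg(V\setminus S)\}$ and using $\lambda_2(\mathcal{L})\le R(y)$ yields $\lambda_2(\mathcal{L})\le 2h$.

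The harder right inequality $h\le\sqrt{2\lambda_2(\mathcal{L})}$ I would prove by a threshold-rounding (``sweep'') argument. Let $y$ be a $\lambda_2$-eigenvector in the form above, let $c$ be a volume-weighted median of its values (a vertex with both $\{v:y_v>y_c\}$ and $\{v:y_v<y_c\}$ of volume at most $\deg(V)/2$), replace $y$ by the shift $z=y-y_c\mathbf{1}$, and split $z=z_+-z_-$ into its nonnegative and nonpositive parts, which have disjoint supports of volume at most $\deg(V)/2$. The core is the lemma: for any nonnegative $w$ whose support has volume at most $\deg(V)/2$, some super-level set $S_t=\{v:w_v^2>t\}$ satisfies $|E(S_t,V\setminus S_t)|/\deg(S_t)\le\sqrt{2R(w)}$. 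To prove it I would choose $t$ uniformly in $[0,\max_v w_v^2]$, so that $\mathbb{E}\,|E(S_t,V\setminus S_t)|=\sum_{uv\in E}|w_u^2-w_v^2|/\max_v w_v^2$ and $\mathbb{E}\,\deg(S_t)=\sum_{v}\deg(v)w_v^2/\max_v w_v^2$; a standard averaging argument then produces a threshold with cut-to-volume ratio at most $\sum_{uv\in E}|w_u^2-w_v^2|/\sum_{v}\deg(v)w_v^2$. Factoring $|w_u^2-w_v^2|=|w_u-w_v|(w_u+w_v)$ and applying Cauchy--Schwarz together with $\sum_{uv\in E}(w_u+w_v)^2\le 2\sum_{v}\deg(v)w_v^2$ converts this into $\sqrt{2R(w)}$; since $\deg(S_t)\le\deg(V)/2$, its cut-to-volume ratio is a genuine conductance, whence $h\le\sqrt{2R(w)}$.

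To finish I would relate $z_+,z_-$ back to $y$. Disjointness of supports gives, edge by edge, $\sum_{uv\in E}(z_{+,u}-z_{+,v})^2+\sum_{uv\in E}(z_{-,u}-z_{-,v})^2\le\sum_{uv\in E}(y_u-y_v)^2$, while degree-orthogonality of $y$ makes the shift only increase the denominator, so $\sum_{v}\deg(v)(z_{+,v}^2+z_{-,v}^2)\ge\sum_{v}\deg(v)y_v^2$. Hence the combined ratio is at most $R(y)=\lambda_2(\mathcal{L})$, and the mediant inequality forces $\min\{R(z_+),R(z_-)\}\le\lambda_2(\mathcal{L})$; applying the lemma to whichever part attains the minimum gives $h\le\sqrt{2\lambda_2(\mathcal{L})}$. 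I expect the upper-bound direction to be the main obstacle: the choice of the volume-weighted median shift (so that every level set stays on the lighter side of the cut) and the Cauchy--Schwarz step on $\sum_{uv\in E}|w_u^2-w_v^2|$ are the delicate points, whereas the lower bound is a one-line test-vector computation.
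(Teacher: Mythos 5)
Your proof is correct, but there is nothing in the paper to compare it against: Theorem \ref{TheoremCheeger} is quoted as a known result of Alon and Milman \cite{AlMi85}, and the paper never proves it --- it only uses it (via Remark \ref{RemarkLambda} and Theorem \ref{TheoremMinorFree}) inside the proof of Proposition \ref{Proposition1}. What you have reconstructed is the standard sweep-cut proof, and the steps check out with the stated constants: the substitution $g=D^{1/2}y$ correctly converts the Rayleigh quotient of $\mathcal{L}$ into $R(y)$ under the constraint $\sum_v\deg(v)y_v=0$; your two-valued test vector indeed gives $R(y)=|E(S,V\setminus S)|\left(\frac{1}{\deg(S)}+\frac{1}{\deg(V\setminus S)}\right)\le \frac{2|E(S,V\setminus S)|}{\min\{\deg(S),\deg(V\setminus S)\}}$, settling the left inequality; and in the hard direction the random-threshold lemma, the factorization $|w_u^2-w_v^2|=|w_u-w_v|(w_u+w_v)$ with Cauchy--Schwarz and $\sum_{uv\in E}(w_u+w_v)^2\le 2\sum_v\deg(v)w_v^2$, together with the mediant inequality applied to $z_+,z_-$, yield exactly $h\le\sqrt{2\lambda_2(\mathcal{L})}$. (The edge-by-edge estimate $(z_{+,u}-z_{+,v})^2+(z_{-,u}-z_{-,v})^2\le(z_u-z_v)^2$ holds because in the mixed-sign case $z_uz_v<0$, and degree-orthogonality gives $\sum_v\deg(v)z_v^2=\sum_v\deg(v)y_v^2+y_c^2\deg(V)\ge\sum_v\deg(v)y_v^2$, as you claim.) Two trivial loose ends worth making explicit in a written version: if the volume-weighted median value is extreme, one of $z_+,z_-$ vanishes identically and you apply the lemma directly to the other part, whose denominator still dominates $\sum_v\deg(v)y_v^2$; and in the averaging step you should note that any threshold $t<\max_v w_v^2$ gives $S_t\neq\emptyset$, so the witness set produced by the first-moment argument is a legitimate nonempty proper subset, with $\deg(S_t)\le\deg(V)/2$ guaranteeing that $\deg(S_t)$ realizes the minimum in the conductance.
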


One of the first results for minor-free graphs shows that they have at most a linear number of edges in terms of the number of vertices.

\begin{theorem}[\cite{Ma67}]\label{TheoremSparse}
	A class of graphs with an excluded minor has at most a linear number of edges. That is, for every graph $H$ there exists a constant $c_H$ such that every graph on $n$ vertices without a minor $H$ has at most $c_Hn$ edges.
\end{theorem}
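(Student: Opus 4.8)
The plan is to reduce the general excluded minor $H$ to the complete graph on $t=|V(H)|$ vertices, and then to establish the linear edge bound for $K_t$-minor-free graphs. Since $H$ has exactly $t$ vertices, $H$ is a subgraph, and hence a minor, of $K_t$; consequently any graph containing $K_t$ as a minor also contains $H$ as a minor. Taking the contrapositive, every $H$-minor-free graph is $K_t$-minor-free, so it suffices to produce a constant $c_t$ such that every $K_t$-minor-free graph on $n$ vertices has at most $c_t n$ edges, and then to set $c_H = c_t$.

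For the core statement I would prove the equivalent extremal form: there is a constant $d_t$ such that every graph of average degree at least $d_t$ -- equivalently, with more than $\tfrac{d_t}{2}\,n$ edges -- contains $K_t$ as a minor. I would argue by induction on $t$, aiming for $d_t = 2^{t-2}$. The base cases $t\le 3$ are immediate, since average degree at least $1$ forces an edge $(=K_2)$ and average degree at least $2$ forces a cycle, which contracts to $K_3$.

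For the inductive step I would assume the claim for $t-1$ and take a graph $G$ with more than $2^{t-2}n$ edges. The key idea is to pass to a minor $H$ of $G$ that is minimal among those still satisfying the density hypothesis $\|H\| \ge 2^{t-2}|H|$ (where $\|\cdot\|$ and $|\cdot|$ count edges and vertices). Minimality, read off from the effect of contracting a single edge, forces the two endpoints of every edge to share many common neighbors: contracting $xy$ must break the density bound, which quantitatively gives $|N(x)\cap N(y)| \ge 2^{t-2}-1$. This lets me locate a vertex $v$ whose neighborhood $H[N(v)]$ is itself dense enough -- average degree at least $2^{t-3}=2^{(t-1)-2}$ -- to contain a $K_{t-1}$ minor by the inductive hypothesis. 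Contracting that $K_{t-1}$ minor inside $N(v)$ and adjoining $v$, which is adjacent to every branch set, yields the desired $K_t$ minor.

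The main obstacle I anticipate is precisely this contraction bookkeeping in the inductive step: verifying that a minor-minimal witness has the required local density, and checking that the branch sets realizing the $K_{t-1}$ minor inside $N(v)$, together with $v$, genuinely assemble into a $K_t$ minor without overlaps. Controlling how the edge-to-vertex ratio changes under a single edge contraction -- and thereby pinning down the exponential constant $2^{t-2}$ -- is the delicate quantitative part; by contrast, the reduction to $K_t$ and the base cases are routine.
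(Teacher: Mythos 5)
The paper does not actually prove this statement: it is imported as a known result of Mader, cited as \cite{Ma67}, and used as a black box, so there is no internal proof to compare against. Your proposal supplies a correct, self-contained proof, and it is essentially the classical argument (Mader's induction, in the streamlined form found in Diestel's \emph{Graph Theory}): reduce $H$-minor-freeness to $K_t$-minor-freeness, then show by induction on $t$ that sufficiently large average degree forces a $K_t$ minor, via a minor-minimal dense witness, the common-neighborhood bound for every edge of that witness, and the inductive hypothesis applied to the induced neighborhood $H[N(v)]$; this yields the theorem with $c_H$ exponential in $|V(H)|$, which is all the paper needs since it only asserts the existence of some constant. One bookkeeping slip should be repaired, and it sits exactly in the spot you yourself flagged as delicate: you state the induction for average degree at least $d_t=2^{t-2}$, but then take the minor minimal with respect to $\|H\|\ge 2^{t-2}|H|$, which is average degree at least $2^{t-1}$ --- twice the hypothesis; read literally, such a minimal minor need not exist (e.g.\ when $G$ meets the average-degree bound exactly), so the induction does not close as written. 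The fix is to work with one consistent normalization, say the edge-to-vertex ratio: take $H$ minor-minimal with $\|H\|\ge 2^{t-3}|H|$; since contracting an edge $xy$ destroys exactly $1+|N(x)\cap N(y)|$ edges and one vertex, minimality gives $|N(x)\cap N(y)|> 2^{t-3}-1$, hence $\ge 2^{t-3}$ by integrality, for every edge $xy$; then $H[N(v)]$ has minimum --- hence average --- degree at least $2^{t-3}=2^{(t-1)-2}$, wait for the ratio convention this is ratio at least $2^{t-4}=2^{(t-1)-3}$, and the inductive hypothesis applies verbatim, after which your assembly of the $K_t$ minor (disjoint branch sets inside $N(v)$ plus the vertex $v$ itself) is sound.
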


A recent important result by Biswal et al. for minor-free graphs gives an upper bound for the second smallest eigenvalue of the Laplacian matrix. 

\begin{theorem}[\cite{BiLeRa10}, Theorem $5.3$]\label{TheoremMinorFree}
	If $G$ is $K_h$-minor-free and $|V|\geq c_1h^2\log h$, then $\lambda_2(L)\leq c_2\frac{\Delta(G)h^6\log h}{|V|}$ for some positive absolute constants $c_1,c_2$.
\end{theorem}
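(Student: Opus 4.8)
The plan is to reduce the eigenvalue bound to the construction of a suitably \emph{spread out} geometric embedding of $G$, and then to build that embedding for $K_h$-minor-free graphs. First I would record the variational reduction. By the Courant--Fischer characterization $\lambda_2(L)=\min_{f\perp\mathbf 1}\frac{\sum_{ij\in E}(f_i-f_j)^2}{\sum_i f_i^2}$, and applying this coordinatewise to a vector-valued map $F\colon V\to\R^d$ whose coordinates are each orthogonal to $\mathbf 1$ (equivalently $\sum_i F_i=0$) gives, for every such $F$,
\[
\lambda_2(L)\;\le\;\frac{n\sum_{ij\in E}\|F_i-F_j\|^2}{\sum_{i<j}\|F_i-F_j\|^2},
\]
where I used $\sum_{i<j}\|F_i-F_j\|^2=n\sum_i\|F_i\|^2$ for centred $F$; since all three sums are translation invariant, the inequality in fact holds for any $F$. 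Thus it suffices to exhibit an embedding in which the total squared length of the edges is tiny compared with the total squared distance between all pairs: normalising the pairwise sum to $\Theta(n^2)$, I would need the edge sum to be only $O(\Delta h^6\log h)$, i.e.\ bounded \emph{independently of $n$}.

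Second, I would observe that the obvious source of such an embedding, a single balanced separator, is far too weak. A $K_h$-minor-free graph has a balanced vertex separator of size $O(h^{3/2}\sqrt n)$ (Alon--Seymour--Thomas), and the corresponding cut metric $d_{ij}=\mathbf 1[i,j\text{ separated}]$ has pairwise sum $\Theta(n^2)$ but edge sum $O(\Delta h^{3/2}\sqrt n)$, which through the display only yields $\lambda_2(L)=O(\Delta h^{3/2}/\sqrt n)$ --- off by a factor $\sqrt n$. To reach the $1/n$ rate the embedding must be genuinely global, spreading all $n$ vertices over a bounded region while keeping every edge extremely short on average, exactly as the Koebe circle packing does in the planar Spielman--Teng argument.

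Third, lacking a circle packing for minor-free graphs, I would construct the embedding by the metrical-deformation-via-flows method: route a concurrent multicommodity flow carrying the uniform all-pairs demand, and use the flow to deform an initial embedding, pushing flow-distant vertices apart while the structure of the flow paths keeps the endpoints of each edge close. The amount of spread achievable per unit of edge energy is governed by the congestion of the flow, and this is where minor-freeness must enter quantitatively: using the separator/structure theory of $K_h$-minor-free graphs, together with the sparsity bound $m=O(h\sqrt{\log h}\,n)$ underlying Theorem \ref{TheoremSparse} (which supplies the $\log h$ factor), I would bound the relevant flow parameter by $O(h^6)$. Passing from pairwise distances to edge distances produces the factor $\Delta$, and the hypothesis $|V|\ge c_1h^2\log h$ is what guarantees the graph is large enough for the flow and averaging estimates to dominate the lower-order terms.

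I expect the flow construction and its congestion analysis to be the main obstacle. Producing a single sparse cut is easy, but manufacturing one global $\ell_2^2$ spreading metric whose total edge energy is independent of $n$ --- and controlling the loss by a clean polynomial in $h$ rather than by the $\sqrt n$-lossy cut metric --- is the entire technical content; the variational reduction of the first step and the degree bookkeeping are routine by comparison.
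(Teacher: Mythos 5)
This statement is not proved in the paper at all: it is imported verbatim from Biswal, Lee and Rao \cite{BiLeRa10} (their Theorem 5.3), so the comparison to make is with the cited source. At the level of strategy your sketch reconstructs their route faithfully. Your variational reduction is correct (for centred $F$ one has $\sum_{i<j}\Vert F_i-F_j\Vert^2=n\sum_i\Vert F_i\Vert^2$, and summing the Rayleigh quotient over coordinates gives exactly your display), and your diagnosis that a single Alon--Seymour--Thomas separator \cite{AlSeTh90} only yields $\lambda_2(L)=O(\Delta h^{3/2}/\sqrt{n})$ --- short of the target by $\sqrt n$ --- is accurate and is indeed the reason the argument must produce a global spreading embedding rather than one cut. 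Even your bookkeeping of the constants is plausibly right: in \cite{BiLeRa10} the $h^6\log h$ arises as the square of an $h^3\sqrt{\log h}$ loss in a congestion bound, with the $\sqrt{\log h}$ traceable to the Kostochka--Thomason edge bound $m=O(h\sqrt{\log h}\,n)$ you mention.

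The genuine gap is that everything you correctly identify as ``the entire technical content'' is asserted rather than proved, and it is not a routine step one can defer: it \emph{is} the theorem. Concretely, what is missing is (i) the duality making your third step precise --- in \cite{BiLeRa10} the existence of a spreading vertex-weight deformation with edge energy independent of $n$ is obtained by LP duality from a \emph{lower} bound on the vertex congestion of any multicommodity flow routing the uniform all-pairs demand; and (ii) the minor-free input itself, a proof that in a $K_h$-minor-free graph every such flow has congestion $\Omega\bigl(n^{3/2}/(h^3\sqrt{\log h})\bigr)$, which Biswal et al.\ derive via a randomized-rounding/crossing argument showing that a low-congestion uniform flow would force a large clique minor. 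Your sketch names the mechanism but supplies neither the duality statement nor the congestion lower bound, nor any argument for the claimed $O(h^6)$ flow parameter; likewise the role of the hypothesis $|V|\geq c_1h^2\log h$ is gestured at (``large enough for the averaging estimates'') without identifying where it is consumed. As a blind proof attempt this is therefore an accurate map of the terrain with the mountain left unclimbed --- which is consistent with the paper's own choice to cite the result rather than reprove it.
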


\begin{remark}\label{RemarkLambda}
	Notice that $\lambda_2(\mathcal{L})\leq\lambda_2(L)$ for graphs without isolated vertices. 
\end{remark}

\begin{proof}
	We have $\mathcal{L}=D^{-\frac{1}{2}}LD^{-\frac{1}{2}}=(D^{-\frac{1}{2}}LD^{\frac{1}{2}})D^{-1}=:L'D^{-1}$. Matrices $L$ and $L'$ are similar, hence have the same spectrum. Recall that $\lambda_1(\mathcal{L})=\lambda_1(L)=0$, so also $\lambda_1(L')=0$. Now, we restrict linear maps $\mathcal{L}$ and $L'$ to the subspace, denoted by $\ker^{\bot}$, orthogonal to the kernel of $\mathcal{L}$ and $L'$. Since $D^{-1}$ is a shrinking linear map (module of every eigenvalue less or equal to $1$) we get that the module of the smallest eigenvalue of $\mathcal{L}|_{\ker^{\bot}}$ is less or equal to  the module of the smallest eigenvalue of $L'|_{\ker^{\bot}}$. That is, since these eigenvalues are real and nonnegative, $\lambda_2(\mathcal{L})\leq \lambda_2(L')=\lambda_2(L)$.	
\end{proof}

\begin{proposition}\label{Proposition1}
	A class of graphs with an excluded minor, the maximum degree sublinear in the number of edges, and vertices weighted proportionally to their degree has a sublinear weighted edge separator. 
	
	That is, for every graph $H$ and $\varepsilon>0$ there exists $\delta=\delta(\varepsilon)>0$ such that the following is true. Let $G$ be a graph without minor $H$, with $m$ edges, and maximum degree at most $\delta m$. A weight $w(v)=\frac{\deg(v)}{\deg(V)}$ is assigned to every vertex $v$ of $G$. Then, there is a set of no more than $\varepsilon m$ edges in $G$ whose deletion creates a graph in which the total weight of every connected component is smaller than $\varepsilon$.
\end{proposition}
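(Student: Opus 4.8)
The plan is to construct the separator by \emph{recursive spectral bisection}: repeatedly apply Cheeger's Inequality (Theorem~\ref{TheoremCheeger}) to a current piece to extract a cut of small conductance, delete its edges, and recurse on the two sides, stopping once every piece is light; a charging argument then bounds the total number of deleted edges. Since $H$ is a minor of $K_h$ for $h=|V(H)|$, a graph with no $H$-minor is $K_h$-minor-free, and so is each of its induced subgraphs; hence Theorem~\ref{TheoremMinorFree} and the sparsity bound of Theorem~\ref{TheoremSparse} (with a constant $c_H$) are available both for $G$ and for every piece $G[U]$ that arises. Throughout I measure the size of a piece by its \emph{internal volume} $2|E(U)|=\sum_{v\in U}\deg_{G[U]}(v)$, and I stop splitting a piece $U$ as soon as $2|E(U)|<\varepsilon m$.

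The first quantitative step is a uniform conductance bound for pieces. For a piece $U$, Remark~\ref{RemarkLambda} gives $\lambda_2(\mathcal{L}_{G[U]})\le\lambda_2(L_{G[U]})$, Theorem~\ref{TheoremMinorFree} bounds the latter by $c_2\Delta(G[U])h^6\log h/|U|$, and Theorem~\ref{TheoremSparse} gives $|U|\ge|E(U)|/c_H$; together with $\Delta(G[U])\le\Delta(G)\le\delta m$ these yield, for a constant $C=C(H)$,
\[
\sqrt{2\lambda_2(\mathcal{L}_{G[U]})}\ \le\ C\sqrt{\frac{\delta m}{2|E(U)|}}.
\]
By Cheeger's Inequality $G[U]$ therefore has a cut $(S,U\setminus S)$ deleting at most $C\sqrt{\delta m/(2|E(U)|)}\cdot\deg_{G[U]}(S)$ edges, where $S$ is the lighter side, i.e.\ $\deg_{G[U]}(S)=\sum_{v\in S}\deg_{G[U]}(v)\le|E(U)|$. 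The crucial point is that the ratio $\Delta/|U|$ is forced to be small only because $\delta$ is small and, by sparsity, $|U|$ is proportional to the number of internal edges; the same inequalities show that any piece we actually split, having $2|E(U)|\ge\varepsilon m$, satisfies $|U|\ge\varepsilon m/(2c_H)\ge c_1h^2\log h$ once $\delta$ is taken small enough, so that Theorem~\ref{TheoremMinorFree} applies (after first peeling off the isolated vertices of $G[U]$ for free, which also makes the normalized Laplacian well defined).

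Running the recursion, I charge the cost of each split, at most $C\sqrt{\delta m/(2|E(U)|)}\,\deg_{G[U]}(S)$, to the vertices of its lighter side $S$, giving each $v\in S$ a charge $C\sqrt{\delta m/(2|E(U)|)}\,\deg(v)$ (using $\deg_{G[U]}(v)\le\deg(v)$). Because $S$ carries at most half the internal volume of $U$, the internal volume of the piece containing a fixed vertex $v$ at least halves between two successive charges to $v$, and every charge to $v$ occurs at a piece whose internal volume lies in $[\varepsilon m,2m]$. Summing the resulting geometric series, the factors of $m$ cancel and the total charge to $v$ is $O\!\left(C\sqrt{\delta/\varepsilon}\right)\deg(v)$; summing over $v$ and using $\deg(V)=2m$ bounds the total number of deleted edges by $O\!\left(C\sqrt{\delta/\varepsilon}\right)m$, which is at most $\varepsilon m$ once $\delta=\delta(\varepsilon,H)$ is chosen small.

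It remains to pass from internal to weighted (global) lightness. Each final component $C$ is fenced off from the rest of $G$ entirely by deleted edges, so $|E(C,V\setminus C)|$ is at most the total cut, hence at most $\varepsilon m$; combined with the stopping rule $2|E(C)|<\varepsilon m$ this gives $\deg(C)=2|E(C)|+|E(C,V\setminus C)|<2\varepsilon m$, i.e.\ the weight $\deg(C)/\deg(V)<\varepsilon$, as required. I expect the main obstacle to be the second and third steps jointly: verifying that a \emph{single} choice of $\delta$ keeps the conductance small simultaneously at every scale of the recursion, and arranging the charging so that the powers of $m$ cancel in the geometric sum — which is exactly why the argument must be phrased through internal volume and the halving of the lighter side. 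The final reconciliation of internal and global volume is the other delicate point, but it follows cleanly from the fact that every piece is surrounded by cut edges whose total number has already been controlled.
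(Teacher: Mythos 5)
Your proposal is correct and takes essentially the same route as the paper's own proof: recursive bisection via Cheeger's Inequality combined with the eigenvalue bound of Theorem~\ref{TheoremMinorFree} and the sparsity bound of Theorem~\ref{TheoremSparse}, a stopping rule at internal volume $\varepsilon m$, a charging of each cut to the vertices of its lighter side with the volume-halving argument, and the same final reconciliation of internal versus global weight via the bound on the total number of deleted edges. The only cosmetic differences are that the paper recurses on connected components rather than on the two sides of the cut, and it bounds every charge uniformly by its worst-case value at the smallest scale times a logarithmic count of charges per vertex, where you instead sum a geometric series over scales (which even saves the logarithmic factor, though both versions suffice after choosing $\delta$ small).
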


\begin{proof}
	Fix a graph $H$ and $\varepsilon>0$, and let $h=|H|$. Now, choose $\delta>0$ such that $\frac{\varepsilon}{2}\frac{1}{\delta}\frac{1}{c_H}\geq c_1h^2\log h$ and $(\lfloor\log_{2}\frac{1}{\varepsilon}\rfloor+2)2\sqrt{2c_2c_H\frac{2}{\varepsilon}\delta h^6\log h}<\varepsilon$. 
	
	Suppose $G(V,E)$ is a graph without minor $H$, with $m$ edges, and maximum degree at most $\delta m$. In particular, $m\geq\frac{1}{\delta}$. We will show that the statement of the proposition holds for $G$, $\varepsilon$, and $\delta$. Notice that without loss of generality we may assume that $G$ does not have isolated vertices, as these do not impact weights. 
	
	Consider the following procedure which takes on input an induced subgraph $G'(V',E')$ of $G$ with
	$2|E'|=\deg_{G'}(V')\geq\frac{\varepsilon}{2}\deg(V)=\varepsilon m$. Notice that by Theorem \ref{TheoremSparse} we have $|V'|=|E'|\frac{|V'|}{|E'|}\geq\frac{\varepsilon}{2}m\cdot\frac{1}{c_H}\geq\frac{\varepsilon}{2}\frac{1}{\delta}\frac{1}{c_H}\geq c_1h^2\log h$. Clearly, $G'$ is $H$-minor-free, hence also $K_h$-minor-free, thus by Theorem \ref{TheoremMinorFree} since $|V'|\geq c_1h^2\log h$, we have that $\lambda_2(L')\leq c_2\frac{\delta m\cdot h^6\log h}{|V'|}=c_2\frac{m}{|V'|}\delta h^6\log h\leq c_2\frac{|E'|}{|V'|}\frac{2}{\varepsilon}\delta h^6\log h\leq c_2c_H\frac{2}{\varepsilon}\delta h^6\log h$. Thus, by Theorem \ref{TheoremCheeger} and Remark \ref{RemarkLambda}, we get that for some proper subset $S$ of $V'$:
	\begin{equation}\label{cut}
	|E'(S,V'\setminus S)|\leq\min\{\deg_{G'}(S),\deg_{G'}(V'\setminus S)\}\sqrt{2c_2c_H\frac{2}{\varepsilon}\delta h^6\log h}.
	\end{equation}
	The procedure deletes edges $E'(S,V'\setminus S)$ from $G'$, and returns as output all connected components of a resulting graph. These are also induced subgraphs of $G$.
	
	Now, we begin a process starting with the set $T$ consisting of the graph $G$ and repeatedly apply the above procedure to elements of $T$ (that is, induced subgraphs $G'$ of $G$) satisfying $\deg_{G'}(V')\geq\frac{\varepsilon}{2}\deg(V)$. 
	
	Notice that since the degree of each element of the output is smaller than the degree of the input, the process has to end. Keep in mind that at any step of the process vertices of elements of $T$ form a partition of the set $V$. Moreover, after first step of the process elements of $T$ are connected induced subgraphs of $G$.
	
	Suppose that at the end of the process the set $T$ consists of connected induced subgraphs $G_1,\dots,G_t$. Denote by $D$ the set of edges deleted during the process. 
	
	In order to count how many edges were deleted during the process in total, assign in a single procedure that started with $G'$ edges $E'(S,V'\setminus S)$ to vertices in $S$ proportionally to their $G'$-degree when $\deg_{G'}(S)\leq\deg_{G'}(V'\setminus S)$ and to $V'\setminus S$ otherwise. Now, by the inequality (\ref{cut}) in this single run of the procedure
	every vertex $v$ in $S$ got assigned at most $\deg_{G'}(v)\sqrt{2c_2c_H\frac{2}{\varepsilon}\delta h^6\log h}$ deleted edges when $\deg_{G'}(S)\leq\deg_{G'}(V'\setminus S)$ and $0$ otherwise. 
	
	Notice that every single vertex $v$ gets assigned nonzero deleted edges at most $\lfloor\log_{\frac{1}{2}}\frac{\varepsilon}{2}\rfloor+1=\lfloor\log_{2}\frac{1}{\varepsilon}\rfloor+2$ times. Indeed, it happens when the degree of the induced subgraph to which $v$ belongs gets at least halved. It starts from $\deg(V)$ and ends just after dropping below $\frac{\varepsilon}{2}\deg(V)$. Therefore, summing over all vertices, the total number of deleted edges $|D|$ is at most 
	$(\lfloor\log_{2}\frac{1}{\varepsilon}\rfloor+2)2m\sqrt{2c_2c_H\frac{2}{\varepsilon}\delta h^6\log h}<\varepsilon m$.
	
	Now, connected components of the graph $G\setminus D$ are $G_i$'s. Every $G_i$ has weight equal to $\frac{1}{deg(V)}\deg_{G}(V(G_i))\leq\frac{1}{deg(V)}(\deg_{G_i}(V(G_i))+|D|)<\frac{\varepsilon}{2}+\frac{\varepsilon}{2}=\varepsilon$. 
\end{proof}

\begin{remark}
	Alon, Seymour, and Thomas \cite{AlSeTh90} generalized the planar \emph{vertex} separator theorem of Lipton and Tarjan \cite{LiTa79} to minor-free graphs.
	
	It is known \textnormal{(}see \cite{DiDjSyVr93} using a result of \cite{Mi86}\textnormal{)} that a planar graph $G$ has an \emph{edge} separator of size $O(\sqrt{\Delta(G)|V(G)|})$. It would be interesting to generalize this theorem to minor-free graphs, as this would strengthen Proposition \ref{Proposition1}.
\end{remark}

{
	\renewcommand{\thetheorem}{\ref{thm:main}}
	\begin{theorem} 
		Let $\mathcal{C}$ be a class of graphs excluding a fixed minor $H$ and with the maximum degree sublinear in the number of edges -- that is, for every $\delta>0$ there exists $m_\delta$ such that if  $m \geq m_\delta$ and $G$ is a graph from $\mathcal{C}$ with $m$ edges, then $\Delta(G) \leq \delta m$. Then, for every $\varepsilon>0$ there exists $M_{\varepsilon}$ such that if  $m \geq M_{\varepsilon}$ and $G$ is a graph from $\mathcal{C}$ with $m$ edges, then $\modul(G) \geq 1 - \varepsilon$.
	\end{theorem}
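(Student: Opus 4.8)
The plan is to deduce the theorem directly from Proposition \ref{Proposition1} by exhibiting a single near-optimal partition and estimating its modularity score from below. Since $\modul(G)$ is the maximum of $\modul_{\mathcal{A}}(G)$ over all partitions, it suffices to find one partition $\mathcal{A}$ with $\modul_{\mathcal{A}}(G)\geq 1-\varepsilon$. The natural candidate is the partition into the connected components produced by removing a sparse weighted edge separator.

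Concretely, given $\varepsilon>0$, I would first set $\varepsilon'=\varepsilon/2$ and apply Proposition \ref{Proposition1} to the excluded minor $H$ and $\varepsilon'$ to obtain the threshold $\delta=\delta(\varepsilon')>0$. Then, invoking the sublinear-maximum-degree hypothesis of the class $\mathcal{C}$, I would pick $m_\delta$ so that every $G\in\mathcal{C}$ with $m\geq m_\delta$ edges satisfies $\Delta(G)\leq\delta m$, and set $M_\varepsilon=\max\{m_\delta,\lceil 1/\delta\rceil\}$. For any such $G$, Proposition \ref{Proposition1} yields a set $D$ of at most $\varepsilon' m$ edges whose deletion leaves connected components $G_1,\dots,G_t$, each of weight $\deg_G(V(G_i))/\deg(V)<\varepsilon'$. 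I take $\mathcal{A}=\{V(G_1),\dots,V(G_t)\}$ as the partition.

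The two halves of the modularity score are then bounded separately. For the edge contribution, every edge of $G$ lies either inside some component $G_i$ or in $D$, so $\sum_i|E(G_i)|=m-|D|\geq(1-\varepsilon')m$, giving edge contribution at least $1-\varepsilon'$. For the degree tax, writing $w_i=\deg_G(V(G_i))/\deg(V)$, the weights satisfy $\sum_i w_i=1$ and $w_i<\varepsilon'$, hence $\sum_i w_i^2\leq(\max_i w_i)\sum_i w_i<\varepsilon'$. Combining, $\modul_{\mathcal{A}}(G)\geq(1-\varepsilon')-\varepsilon'=1-2\varepsilon'=1-\varepsilon$, and therefore $\modul(G)\geq 1-\varepsilon$, as required.

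All the genuine difficulty is already absorbed into Proposition \ref{Proposition1} (and, through it, into Cheeger's inequality and the eigenvalue bound of Biswal et al.\ for minor-free graphs); the deduction above is a short two-term estimate. Consequently, the only point to handle with a little care is the bookkeeping of constants -- ensuring the choice $\varepsilon'=\varepsilon/2$ propagates correctly through Proposition \ref{Proposition1} and that $M_\varepsilon$ is large enough both to force $\Delta(G)\leq\delta m$ and to guarantee $m\geq 1/\delta$ (so that the proposition's own hypotheses are met). There is no substantive obstacle remaining in this final step.
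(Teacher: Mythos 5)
Your proposal is correct and follows essentially the same route as the paper's own proof: apply Proposition \ref{Proposition1} with parameter $\varepsilon/2$, take the partition $\mathcal{A}$ into the connected components left after deleting the separator $D$, bound the edge contribution by $1-\varepsilon/2$ and the degree tax by $(\max_i w_i)\sum_i w_i<\varepsilon/2$. The only cosmetic difference is your extra term $\lceil 1/\delta\rceil$ in $M_\varepsilon$, which is redundant since $\Delta(G)\leq\delta m$ together with $\Delta(G)\geq 1$ already forces $m\geq 1/\delta$.
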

	\addtocounter{theorem}{-1}
}

\begin{proof}		
	Fix a class of graphs $\mathcal{C}$ as in the assumption and fix $\varepsilon>0$. Now, choose $\delta:=\delta(\frac{\varepsilon}{2})$ such that the assertion of Proposition \ref{Proposition1} holds. We will show that if $m\geq M_{\varepsilon}:=m_{\delta}$ and $G$ is a graph from $\mathcal{C}$ with $m$ edges, then $\modul(G)>1-\varepsilon$.
	
	Indeed, then by Proposition \ref{Proposition1} there is a set of no more than $\frac{\varepsilon}{2}m$ edges in $G$ whose deletion creates a graph in which the total weight of every connected component is less than $\frac{\varepsilon}{2}$. Now, let $\mathcal{A}$ be the set of those connected components and let $D$ be the set of deleted edges. 
		
	Firstly, notice that 
	$$\sum_{A\in\mathcal{A}}\frac{\vert E(A)\vert}{\vert E(G)\vert}=\frac{\vert E(G)\vert-\vert D\vert}{\vert E(G)\vert}\geq 1-\frac{\varepsilon}{2}.$$
	
	Secondly, we have
	$$\sum_{A\in\mathcal{A}}\left(\frac{\deg(V(A))}{\deg(V(G))}\right)^2=\sum_{A\in\mathcal{A}}w(A)^2<\sum_{A\in\mathcal{A}}\frac{\varepsilon}{2}w(A)=\frac{\varepsilon}{2}.$$
	
	Concluding,
	$$\modul(G)\geq\sum_{A\in\mathcal{A}}\left(\frac{\vert E(A)\vert}{\vert E(G)\vert}-\left(\frac{\deg(V(A))}{\deg(V(G))}\right)^2\right)>1-\varepsilon.$$
\end{proof}

\bibliographystyle{plain}
\bibliography{modularity}




\end{document}